\numberwithin{equation}{section}
\newcommand{\hgline}[2]{
	\pgfmathsetmacro{\thetaone}{#1}
	\pgfmathsetmacro{\thetatwo}{#2}
	\pgfmathsetmacro{\theta}{(\thetaone+\thetatwo)/2}
	\pgfmathsetmacro{\phi}{abs(\thetaone-\thetatwo)/2}
	\pgfmathsetmacro{\close}{less(abs(\phi-90),0.0001)}
	\ifdim \close pt = 11pt
	\draw[red] (\theta+180:1) -- (\theta:1);
	\else
	\pgfmathsetmacro{\R}{tan(\phi)}
	\pgfmathsetmacro{\distance}{sqrt(1+\R^2)}
	\draw[red] (\theta:\distance) circle (\R);
	\fi
}
\newtheorem{prop}{Proposition}[section]
\newtheorem{theorem}[prop]{Theorem}
\newtheorem{remark}[prop]{Remark}
\def\hyper@x#1,#2\relax{#1}
\def\hyper@y#1,#2\relax{#2}
\def\hyper@coords#1{#1}
\newif\ifhyper@vertical
\def\hyper@computer#1#2{%
	\edef\hyper@toscan{(#1)}
	\tikz@scan@one@point\hyper@coords\hyper@toscan
	\edef\hyper@sx{\the\pgf@x}
	\edef\hyper@sy{\the\pgf@y}
	\edef\hyper@toscan{(#2)}
	\tikz@scan@one@point\hyper@coords\hyper@toscan
	\edef\hyper@ex{\the\pgf@x}
	\edef\hyper@ey{\the\pgf@y}
	\pgfmathsetmacro{\hyper@mx}{(\hyper@ex + \hyper@sx)/2}
	\pgfmathsetmacro{\hyper@my}{(\hyper@ey + \hyper@sy)/2}
	\pgfmathsetmacro{\hyper@dx}{\hyper@ex - \hyper@sx}
	\pgfmathparse{\hyper@dx == 0 ? "\noexpand\hyper@verticaltrue" : "\noexpand\hyper@verticalfalse"}
	\pgfmathresult
	\ifhyper@vertical
	\edef\hyper@cmd{-- (\tikztotarget)}
	\else
	\pgfmathsetmacro{\hyper@dy}{\hyper@ey - \hyper@sy}
	\pgfmathsetmacro{\hyper@t}{\hyper@my/\hyper@dx}
	\pgfmathsetmacro{\hyper@cx}{\hyper@mx + \hyper@t * \hyper@dy}
	\pgfmathsetmacro{\hyper@radius}{veclen(\hyper@cx - \hyper@sx, \hyper@sy)}
	\pgfmathsetmacro{\hyper@sangle}{180 - atan2(\hyper@sy,\hyper@cx-\hyper@sx)}
	\pgfmathsetmacro{\hyper@eangle}{180 - atan2(\hyper@ey,\hyper@cx-\hyper@ex)}
	\edef\hyper@cmd{arc[radius=\hyper@radius pt, start angle=\hyper@sangle, end angle=\hyper@eangle]}
	\fi
}
\def\hyper@disc@computer#1#2{%
	\edef\hyper@toscan{(#1)}
	\tikz@scan@one@point\hyper@coords\hyper@toscan
	\edef\hyper@sx{\the\pgf@x}
	\edef\hyper@sy{\the\pgf@y}
	\edef\hyper@toscan{(#2)}
	\tikz@scan@one@point\hyper@coords\hyper@toscan
	\edef\hyper@ex{\the\pgf@x}
	\edef\hyper@ey{\the\pgf@y}
	\pgfmathsetmacro{\hyper@det}{\hyper@sx * \hyper@ey - \hyper@sy * \hyper@ex}
	\pgfmathparse{\hyper@det == 0 ? "\noexpand\hyper@verticaltrue" : "\noexpand\hyper@verticalfalse"}
	\pgfmathresult
	\ifhyper@vertical
	\edef\hyper@cmd{-- (\tikztotarget)}
	\else
	\pgfmathsetmacro{\hyper@mx}{(\hyper@ex + \hyper@sx)/2}
	\pgfmathsetmacro{\hyper@my}{(\hyper@ey + \hyper@sy)/2}
	\pgfmathsetmacro{\hyper@dx}{\hyper@ex - \hyper@sx}
	\pgfmathsetmacro{\hyper@dy}{\hyper@ey - \hyper@sy}
	\pgfmathsetmacro{\hyper@dradius}{\pgfkeysvalueof{/tikz/hyperbolic disc radius}}
	\pgfmathsetmacro{\hyper@t}{((\hyper@dradius)^2 - \hyper@sx * \hyper@ex - \hyper@sy * \hyper@ey)/(2 * (\hyper@sx * \hyper@ey - \hyper@sy * \hyper@ex))}
	\pgfmathsetmacro{\hyper@radius}{sqrt((\hyper@t)^2 + .25) * veclen(\hyper@dx,\hyper@dy)}
	\pgfmathsetmacro{\hyper@cx}{\hyper@mx + \hyper@t * \hyper@dy}
	\pgfmathsetmacro{\hyper@cy}{\hyper@my - \hyper@t * \hyper@dx}
	\pgfmathsetmacro{\hyper@sangle}{atan2(\hyper@sy-\hyper@cy,\hyper@sx - \hyper@cx)}
	\pgfmathsetmacro{\hyper@eangle}{atan2(\hyper@ey-\hyper@cy,\hyper@ex - \hyper@cx)}
	\pgfmathsetmacro{\hyper@eangle}{\hyper@eangle > \hyper@sangle + 180 ? \hyper@eangle - 360 : \hyper@eangle}
	\edef\hyper@cmd{arc[radius=\hyper@radius pt, start angle=\hyper@sangle, end angle=\hyper@eangle]}
	\fi
}
\def\hyper@plane@tangent#1#2{%
	\edef\hyper@toscan{(#1)}
	\tikz@scan@one@point\hyper@coords\hyper@toscan
	\edef\hyper@sx{\the\pgf@x}
	\edef\hyper@sy{\the\pgf@y}
	\edef\hyper@toscan{(#2)}
	\tikz@scan@one@point\hyper@coords\hyper@toscan
	\edef\hyper@ex{\the\pgf@x}
	\edef\hyper@ey{\the\pgf@y}
	\pgfmathsetmacro{\hyper@ex}{\hyper@ex - \hyper@sx}
	\pgfmathsetmacro{\hyper@ey}{\hyper@ey - \hyper@sy}
	\pgfmathparse{\hyper@ex == 0 ? "\noexpand\hyper@verticaltrue" : "\noexpand\hyper@verticalfalse"}
	\pgfmathresult
	\ifhyper@vertical
	\pgfmathsetmacro{\hyper@d}{\hyper@ey/1cm}
	\pgfmathsetmacro{\hyper@radius}{\hyper@sy * exp(\hyper@d) - \hyper@sy}
	\edef\hyper@cmd{-- ++(0,\hyper@radius pt)}
	\else
	\pgfmathsetmacro{\hyper@d}{\hyper@ex > 0 ? veclen(\hyper@ex,\hyper@ey) : -veclen(\hyper@ex,\hyper@ey)}
	\pgfmathsetmacro{\hyper@radius}{abs(\hyper@sy * \hyper@d / \hyper@ex)}
	\pgfmathsetmacro{\hyper@sangle}{90 + atan(\hyper@ey/\hyper@ex)}
	\pgfkeysgetvalue{/tikz/hyperbolic plane target angle}{\hyper@eangle}
	\ifx\hyper@eangle\pgfutil@empty
	\pgfmathsetmacro{\hyper@d}{\hyper@d/1cm}
	\pgfmathsetmacro{\hyper@ey}{\hyper@ey/1cm}
	\pgfmathsetmacro{\hyper@tanhd}{tanh(\hyper@d)}
	\pgfmathsetmacro{\hyper@eangle}{acos((\hyper@d * \hyper@tanhd - \hyper@ey)/(\hyper@d - \hyper@ey * \hyper@tanhd))}
	\fi
	\edef\hyper@cmd{arc[radius=\hyper@radius pt, start angle=\hyper@sangle, end angle=\hyper@eangle]}
	\fi
}
\tikzset{%
	hyperbolic disc radius/.initial={1cm},
	hyperbolic plane/.style={
		to path={
			\pgfextra{\hyper@computer\tikztostart\tikztotarget}
			\hyper@cmd
		}
	},
	hyperbolic plane tangent/.style={
		to path={
			\pgfextra{\hyper@plane@tangent\tikztostart\tikztotarget}
			\hyper@cmd
		}
	},
	hyperbolic disc/.style={
		to path={
			\pgfextra{\hyper@disc@computer\tikztostart\tikztotarget}
			\hyper@cmd
		}
	},
	hyperbolic plane target angle/.initial={},
}
\begin{document}
	
	\title[\tiny Essential $p$-capacity-volume estimates for rotationally symmetric manifolds]{Essential $p$-capacity-volume estimates for\\
		 rotationally symmetric manifolds}
	\author{\tiny Xiaoshang Jin}
	\address{School of Mathematics and Statistics, Huazhong University of Science and Technology, Wuhan, Hubei 430074, China}
	\email{jinxs@hust.edu.cn}
	\author{\tiny Jie Xiao}
	\address{Department of Mathematics \& Statistics,
		Memorial University, St. John's, NL A1C 5S7, Canada}
	\email{jxiao@math.mun.ca}
	
	\thanks{The 1st-named \& 2nd-named authors were supported by {NNSF of China
			\# 12201225} and NSERC of Canada \# 202979 respectively.}
	
	\date{}

\keywords{Rotationally symmetric manifold, surface area, volume, relative $p$-capacity, principal $p$-frequency, weak $(p,q)$-imbedding,}
\subjclass[2010]{31B15, 49Q10, 53C21, 74G65}
\date{}


\maketitle


\begin{abstract} Given $p\in [1,\infty]$, this article presents the novel basic volumetric estimates for the relative $p$-capacities with their applications to finding not only the sharp weak $(p,q)$-imbeddings but also the precise lower bounds of the principal $p$-frequencies, which principally live in the rotationally symmetric manifolds.
\end{abstract}

\tableofcontents

\section*{Introduction}\label{s0}

Given $p\in (0,\infty]$, if
$$\nu_n=|\mathbb B^n|\ \ \&\ \ n\nu_n=|\mathbb S^{n-1}|
$$
are respectively the volume of the unit open ball $\mathbb B^n$ \& the surface area of the unit closed sphere $\mathbb S^{n-1}=\partial\mathbb B^n$ within the $1\le n$-dimensional Euclidean space $\mathbb R^n$, then it is well-known that for any compact subset $K$ of a bounded open subdomain $\Omega$ of an $n$-dimensional complete Riemannian manifold $(\mathbb M^n,g)$ with volume element $d\upsilon_g$, the relative functional/variational $p$-capacity
$$
{\rm cap}_p(K,\Omega)=\begin{cases} \inf\Big\{\int_{\Omega}|\nabla f|^p\,d\upsilon_g:\ \mathrm{1}_K\le f\in C_0^1(\Omega)\Big\}&\text{as}\ \ p\in (0,\infty);\\
\inf\Big\{\underset{x\in \Omega}{\sup}|\nabla f(x)|:\ \mathrm{1}_K\le f\in C_0^1(\Omega)\Big\}&\text{as}\ \ p=\infty,
\end{cases}
$$
defines the minimum energy of a condenser $(K,\Omega)$ (cf. \cite{PS2} for $p=2=n-1$) whose importance in mathematical physics can be three-dimensionally pictured as:

\begin{center}
	\includegraphics[scale=0.166,keepaspectratio]{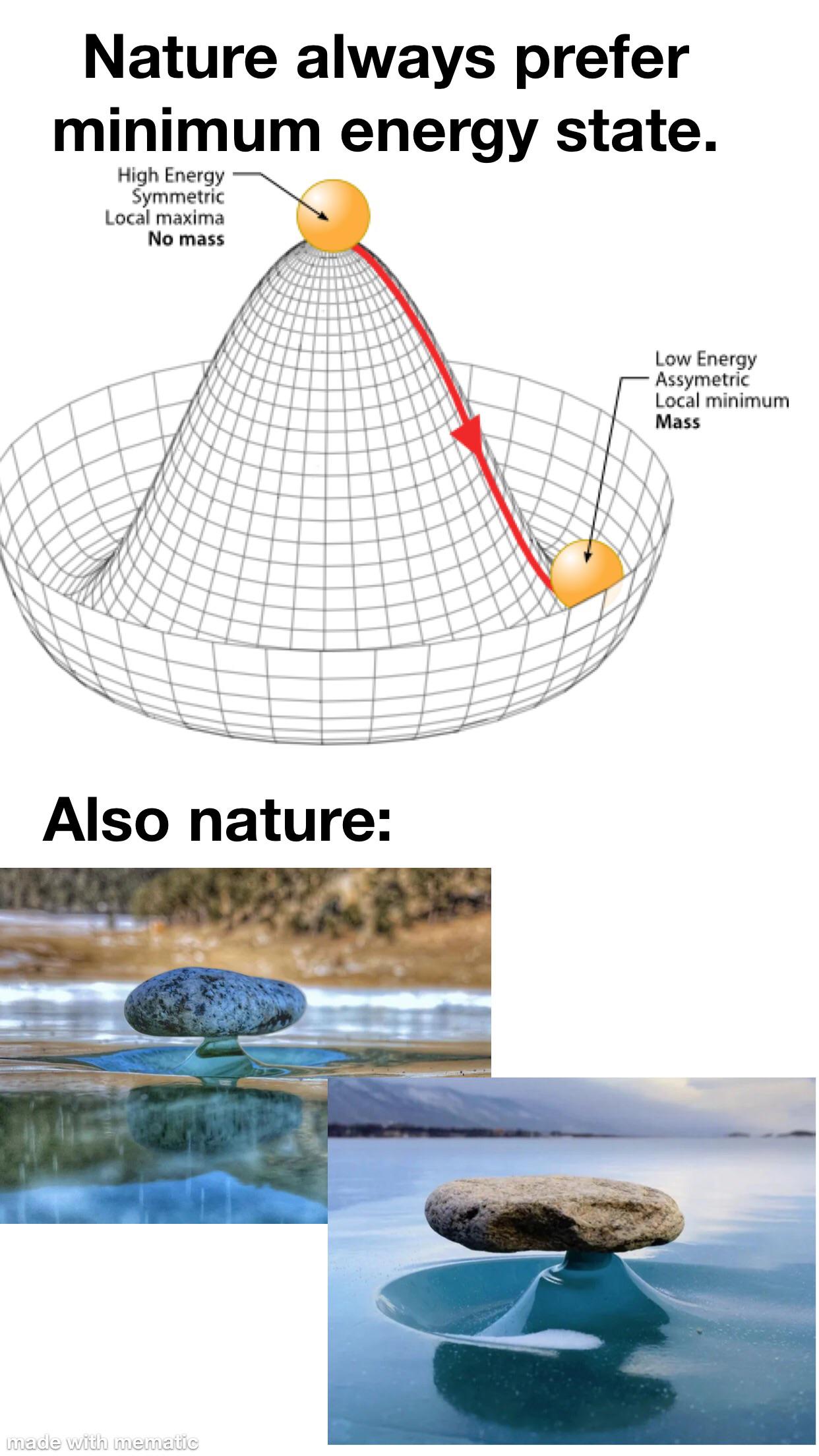}\\
 {\tiny https://www.reddit.com/r/physicsmemes/comments/tcbosm/principle$_{-}$of$_{-}$minimum-energy/?rdt=55327}
\end{center}

Referring to not only \cite{Sa} for the Euclidean space $\mathbb R^n$ but also \cite{JX} for the hyperbolic space $\mathbb H^{n+1}$, we have
$$
{\rm cap}_p(K,\Omega)=0\ \ \forall\ \ p\in (0,1).
$$
Consequently, our main interest in these capacities then lies with using the volume pair $\{|K|, |\Omega|\}$ to find the essential lower bound of each relative capacity
 $$
 {\rm cap}_p(K,\Omega)\ \ \forall\ \ p\in [1,\infty].
 $$

Note that if
$$
\begin{cases}
d\upsilon_g(x)=dx=\text{the Lebesgue volume element}\ \ \forall\ \ x\in \mathbb M^n=\mathbb R^n;\\
rK=\{rx\in\mathbb{R}^n:x\in K\};\\
\emptyset\not=K^\circ=\text{the interior of $K$};\\
f\in C^1(K^\circ);\\
f_r(\cdot)=f(\frac{\cdot}{r}):\ \ rK\rightarrow\mathbb{R},
\end{cases}
$$
then
$$
\begin{cases}
|K|^{\frac{p}{n}-1}\int_K |\nabla f(x)|^pdx=|rK|^{\frac{p}{n}-1}\int_{rK} |\nabla f_r(y)|^pdy&\ \ \text{as}\ \ p\in [1,\infty);\\
|K|^{\frac{1}{n}}\underset{x\in K}{\sup}|\nabla f(x)|=|rK|^{\frac{1}{n}}\underset{y\in rK}{\sup}|\nabla f_r(y)|&\ \ \text{as}\ \ p=\infty.
\end{cases}
$$
In other words, $$\int_K \left(\frac{|\nabla f(x)|^p}{|K|^{1-\frac{p}{n}}}\right)\,dx\ \ \&\ \ \sup_{x\in K}\left(\frac{|\nabla f(x)|}{|K|^{-\frac1n}}\right)$$
are scaling invariant. Thus, within  the 1st two sections of this paper we are naturally led to utilize the powers
$$
1-\frac{p}{n}\le 1-\frac1n
$$
for a brand-new investigation of the precise-to-optimal lower bound of $\mathrm{cap}_p(K,\Omega)$ based on such a power of the volume-rate as $\big(|K|/|\Omega|\big)^\alpha$, see also:
$$
\begin{cases}\text{Theorem \ref{t11} when $\alpha<1-\frac{p}{n}$ lives in a complete Riemannian manifold};\\
\text{Theorem \ref{t21} when $\alpha\ge 1-\frac{p}{n}$ lives in a rotationally symmetric manifold}.
\end{cases}
$$
Furthermore, within the 2nd two sections of this paper, we employ Theorem \ref{t21} to not only sharply embed the Sobolev $p$-space
$$
W^{1,p}_0(\Omega)=\overline{\left\{f\in C^1_0(\Omega):\ \|\nabla f\|_p=\left(\int_{\Omega}|\nabla f|^p\,d\upsilon_g\right)^{\frac{1}{p}}<\infty\right\}}
$$
into the weak Lebesgue $q$-space
$$
L^{q,\infty}(\Omega)=\begin{cases} \left\{f:\ \|f\|_{q,\infty}=\sup\limits_{\lambda>0}\lambda|\{x\in\Omega:\ |f(x)|\geq \lambda\}|^{\frac{1}{q}}<\infty\right\}&\text{as}\ \ q\in (0,\infty);\\
\left\{f:\  \|f\|_{\infty}=\underset{x\in \Omega}{\text{esssup}}\,|f(x)|<\infty\right\}&\text{as}\ \ q= \infty,
\end{cases}
$$
but also precisely handle the volumetric behavior of the principal $p$-frequency (cf. \cite[p. 24]{PS} for $p=2=n$)
$$
\lambda_{1,p}(\Omega)=\begin{cases}\inf\Bigg\{\frac{\int_{\Omega}|\nabla f|^p\,d\upsilon_g}{\int_{\Omega}|f|^p\,d\upsilon_g}:\ f\in C_0^1(\Omega)\Bigg\}&\text{as}\ \ p\in [1,\infty);\\
\inf\Bigg\{\frac{\sup\limits_{\Omega}|\nabla f|}{\sup\limits_{\Omega}|f|}:\ f\in C_0^1(\Omega)\Bigg\}&\text{as}\ \ p=\infty,
\end{cases}
$$
see also:
$$
\begin{cases}
\text{Theorem \ref{t31} when $\alpha=1-\frac{p}{n}$ lives in a rotationally symmetric manifold;}\\
\text{Theorem \ref{t41} when $\alpha=1>1-\frac{p}{n}$
	 lives in a rotationally symmetric manifold.}
 \end{cases}
$$

\section{Vanishing relative $p$-capacities under $\alpha<1-\frac{p}{n}$}\label{s1}

\begin{theorem}\label{t11}
Given
$$
\alpha< 1-\frac{p}{n}\le 1-\frac1n,
$$
suppose that $(\mathbb M^n,g)$ is an $n$-dimensional complete Riemannian manifold. There hold two vanishing results for ${\rm cap}_p(K,\Omega)$.
\begin{itemize}
\item[\rm(i)] If $p\not=n$, then
\begin{equation}\label{1.1}
  0=\inf\limits_{K\subset\subset \Omega}\begin{cases}|K|^{-\alpha}{\rm cap}_p(K,\Omega)&\ \ \text{as}\ \ p\not=\infty;\\
   |K|^\frac1n{{\rm cap}_\infty(K,\Omega)}&\ \ \text{as}\ \ p=\infty.
   \end{cases}
 \end{equation}

\item[\rm(ii)] If $p=n,$ then
\begin{equation}\label{1.2}
0=\inf\limits_{K\subset\subset \Omega}|K|^{-1}{\exp\left(-\beta \big({\rm cap}_n(K,\Omega)\big)^{\frac{1}{1-n}}\right)}\ \ \forall\ \ \beta>n^{\frac{n}{n-1}}\nu_n^{\frac{1}{n-1}}.
\end{equation}

\end{itemize}
\end{theorem}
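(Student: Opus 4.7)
My approach is to exhibit an explicit shrinking family of compact sets $K\subset\subset\Omega$ on which $\mathrm{cap}_p(K,\Omega)$ is controlled from above by an essentially Euclidean quantity, and then verify by direct calculation that the functionals in \eqref{1.1}--\eqref{1.2} tend to zero along that family. Since no restriction is placed on $(\mathbb M^n,g)$, I would fix any interior point $x_0\in\Omega$ and work in geodesic normal coordinates centered at $x_0$, so that on a small scale the geodesic ball $B_r(x_0)$ is asymptotically Euclidean: $|B_r(x_0)|=\nu_n r^n(1+o(1))$ and the area of the geodesic sphere of radius $t$ satisfies $A(t)=n\nu_n t^{n-1}(1+o(1))$ as $t\to 0$. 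For $0<r<R$ both smaller than $\min\{\mathrm{inj}(x_0),\mathrm{dist}_g(x_0,\partial\Omega)\}$, I set $K=\overline{B_r(x_0)}$ and use radial test functions $f(x)=\psi(\mathrm{dist}_g(x,x_0))$ with $\psi\equiv 1$ on $[0,r]$ and $\psi\equiv 0$ on $[R,\infty)$, which reduces the energy to
$$\int_\Omega|\nabla f|^p\,d\upsilon_g=\int_r^R|\psi'(t)|^p A(t)\,dt.$$

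For each non-critical exponent I would choose $\psi$ to mimic the Euclidean $p$-harmonic profile on the annulus $[r,R]$. In the range $p\in[1,n)$, taking $|\psi'(t)|\propto t^{-(n-1)/(p-1)}$ (and a linear $\psi$ when $p=1$) gives $\mathrm{cap}_p(K,\Omega)=O(r^{n-p})$, hence $|K|^{-\alpha}\mathrm{cap}_p(K,\Omega)=O(r^{\,n-p-n\alpha})\to 0$, because $\alpha<1-p/n$ forces $n-p-n\alpha>0$. For $p\in(n,\infty)$ the same template makes $\mathrm{cap}_p(K,\Omega)$ bounded uniformly in small $r$, while $|K|^{-\alpha}=|K|^{|\alpha|}\to 0$ since $\alpha<1-p/n<0$. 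For $p=\infty$, the affine cutoff $\psi(t)=\min\{1,(R-t)_+/(R-r)\}$ yields $\mathrm{cap}_\infty(K,\Omega)\le 1/(R-r)$, so fixing $R$ and sending $r\to 0$ forces $|K|^{1/n}\mathrm{cap}_\infty(K,\Omega)\to 0$.

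The delicate case is the critical one $p=n$, where I would use the logarithmic cutoff $\psi(t)=\log(R/t)/\log(R/r)$ on $[r,R]$. A direct computation gives $\mathrm{cap}_n(K,\Omega)\le n\nu_n(1+o(1))(\log(R/r))^{1-n}$, hence
$$\bigl(\mathrm{cap}_n(K,\Omega)\bigr)^{1/(1-n)}\ge (n\nu_n)^{-1/(n-1)}(1-o(1))\log(R/r),$$
so that
$$|K|^{-1}\exp\Bigl(-\beta\bigl(\mathrm{cap}_n(K,\Omega)\bigr)^{1/(1-n)}\Bigr)\lesssim r^{-n}\Bigl(\tfrac{r}{R}\Bigr)^{\beta(n\nu_n)^{-1/(n-1)}(1-o(1))}.$$
Since the hypothesis $\beta>n^{n/(n-1)}\nu_n^{1/(n-1)}=n(n\nu_n)^{1/(n-1)}$ is exactly the statement $\beta(n\nu_n)^{-1/(n-1)}>n$, the right side tends to zero as $r\to 0$ with $R$ fixed small enough, establishing \eqref{1.2}. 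The main obstacle, and the only delicate point, is verifying that the $o(1)$ error stemming from the curvature-induced deviation $A(t)-n\nu_n t^{n-1}$ is genuinely negligible relative to the strict inequality $\beta(n\nu_n)^{-1/(n-1)}>n$: this requires a uniform control of the normal-coordinate error on the whole shrinking annulus $[r,R]$, whereas the non-critical cases are robust because there the decisive exponent of $r$ is strictly positive by an open condition that easily absorbs geometric error.
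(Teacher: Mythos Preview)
Your approach is correct and essentially identical to the paper's: both fix an interior point, take $K=\overline{B_r}$ as a shrinking geodesic ball, bound the capacity from above via radial test functions on an annulus $B_R\setminus B_r$, and exploit the Euclidean asymptotics $A(t)\sim n\nu_n t^{n-1}$ of small geodesic spheres; the critical case $p=n$ is handled in both by choosing $R$ small enough that the strict inequality $\beta(n\nu_n)^{-1/(n-1)}>n$ absorbs the curvature error. One small slip to fix: for $p=1$, a linear cutoff on $[r,R]$ with $R$ fixed yields only an $O(1)$ energy, not $O(r^{n-1})$---you should either take $R=2r$ so that $\int_r^{2r}\frac{1}{r}A(t)\,dt\sim |\partial B_r|$, or invoke directly $\mathrm{cap}_1(\overline{B_r},\Omega)\le |\partial B_r|$, which is exactly what the paper does. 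A minor stylistic difference is that for $p=\infty$ you argue directly with a Lipschitz cutoff, whereas the paper reaches the same conclusion by passing to the limit $(\mathrm{cap}_p)^{1/p}\to\mathrm{cap}_\infty$.
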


\begin{proof} (i)
	For $p\neq n,$ we select a point $x\in\Omega$ such that
	 $$
	 B(x,r)\subseteq B(x,R)\subseteq\Omega\ \ \text{for some small $r<R$},
	 $$
	 where $B(x,r)$ is $x$-centered geodesic open ball with radius $r$, thereby estimating
	$$
	\frac{{\rm cap}_p\big(\overline{B(x,r)},\Omega\big)}{\big|\overline{B(x,r)}\big|^\alpha}
	\leq \frac{{\rm cap}_p(\overline{B(x,r)},B(x,R))}{\big|\overline{B(x,r)}\big|^\alpha}\leq\frac{\left(\int_r^R S_t^{\frac{1}{1-p}}dt\right)^{1-p}}{\big|\overline{B(x,r)}\big|^\alpha},
	$$
	where
	$$
	\begin{cases}
	S_t=\big|\{y\in\Omega:{\rm dist}(y,x)=t\}\big|\sim n\nu_nt^{n-1}&\text{as}\ \ t\to 0;\\
	\big|\overline{B(x,t)}\big|\sim \nu_n t^n&\text{as}\ \ t\rightarrow 0;\\
	\big|\partial\overline{B(x,t)}\big|\sim n\nu_n t^{n-1}\text{as}\ \ t\rightarrow 0.
	\end{cases}
	$$
	In the above \& below, $U\sim V$ means that either $U\, \&\, V$ or $U^{-1}\, \&\, V^{-1}$ are equivalent infinitesimals.

	\par If $p=1$, then \cite[p.107, Lemma]{Maz6} yields a constant $C_1$ such that
	\begin{align*}
	\lim_{r\to 0}\frac{{\rm cap}_1\big(\overline{B(x,r)},\Omega\big)}{\big|\overline{B(x,r)}\big|^\alpha}
	&\leq\lim_{r\to 0} \frac{|\partial\overline{B(x,r)}|}{\big|\overline{B(x,r)}\big|^\alpha}\\
	&\le C_1\lim_{r\to 0}r^{n-1-\alpha n}\\
	&=0\ \ \text{since}\ \ \alpha<1-n^{-1}.
	\end{align*}
	Thus, the argument for \eqref{1.1} is divided into three situations.
	
	\par If $1<p<n$, then there is a constant $C_p>0$ such that
	$$
	\lim\limits_{r\rightarrow 0}\frac{\left(\int_r^R S_t^{\frac{1}{1-p}}dt\right)^{1-p}}{\big|\overline{B(x,r)}\big|^\alpha}=C_p\lim\limits_{r\rightarrow 0}
	\frac{r^{n-1}}{r^{n\alpha}}=0\ \ \text{
	since}\ \ n\alpha<n-p<n-1.
$$
However, if $n<p<\infty,$ then $\alpha<1-\frac{p}{n}<0.$ We can still derive that the above limit is $0$ because not only the numerator is bounded but also the  denominator tends on infinity. Therefore, we always have	
	$$
	\lim\limits_{r\rightarrow 0}\frac{{\rm cap}_p\big(\overline{B(x,r)},\Omega\big)}{\big|\overline{B(x,r)}\big|^\alpha}=0.
	$$

\par If $p\to\infty$, then (cf. \cite[Theorem 1]{JXY})
$$
\begin{cases}
\frac{\alpha}{p}<\frac1p-\frac{1}{n}\to -\frac1n<0;\\
\big({\rm cap}_p\big(\overline{B(x,r)},\Omega\big)\big)^\frac1p\to {\rm cap}_\infty\big(\overline{B(x,r)},\Omega\big),
	\end{cases}
$$
and hence
\begin{align*}
\limsup_{p\to\infty}\left(\frac{{\rm cap}_p\big(\overline{B(x,r)},\Omega\big)}{|\overline{B(x,r)}|^{\alpha}{}}\right)^\frac1p
&=\limsup_{p\to\infty}\frac{
\big({\rm cap}_p\big(\overline{B(x,r)},\Omega\big)\big)^\frac1p}{|\overline{B(x,r)}|^{\frac{\alpha}{p}}}\\
&\le |\overline{B(x,r)}|^{\frac{1}{n}}
\big({\rm cap}_\infty\big(\overline{B(x,r)},\Omega\big)\big)\\
&\to 0\ \ \text{as}\ \ r\to 0.
\end{align*}

(ii) If $p=n,$ then
	$$\begin{aligned}
	\frac{\exp\left(-\beta ({\rm cap}_n\big(\overline{B(x,r)},\Omega\big))^{\frac{1}{1-n}}\right)}{|\overline{B(x,r)}|}&\leq \frac{\exp\left(-\beta ({\rm cap}_n(\overline{B(x,r)},B(x,R)))^{\frac{1}{1-n}}\right)}{|\overline{B(x,r)}|}\\
	& \leq \frac{\exp\left(-\beta \int_r^R S_t^{\frac{1}{1-n}}dt\right)}{|\overline{B(x,r)}|}
	\end{aligned}
	$$
	For
	$$
	\beta> n^{\frac{n}{n-1}}\nu_n^{\frac{1}{n-1}},
	$$
	there are sufficiently small $\varepsilon$ \& $R$ such that $$
	\frac{S_t}{t^{n-1}}<n\nu_n+\varepsilon<\Big(\frac{\beta}{n}\Big)^{n-1}\ \ \forall\ \ t<R.
	$$
	Accordingly, we get
	\begin{align*}
	\frac{\exp\left(-\beta \int_r^R S_t^{\frac{1}{1-n}}dt\right)}{|\overline{B(x,r)}|}&\leq\frac{\exp\left(-\beta (n\nu_n+\varepsilon)^{\frac{1}{1-n}}\int_r^R \frac{1}{t}dt\right)}{|\overline{B(x,r)}|}\\
	&=\Big(\frac{r}{R}\Big)^{\beta (n\nu_n+\varepsilon)^{\frac{1}{1-n}}}\frac{1}{|\overline{B(x,r)}|}.
	\end{align*}
	Notice that
	$$\beta (n\nu_n+\varepsilon)^{\frac{1}{1-n}}>n.
	$$
	So, via setting $r\rightarrow 0,$ we derive
	$$
	\lim\limits_{r\rightarrow 0}\frac{\exp\left(-\beta ({\rm cap}_n\big(\overline{B(x,r)},\Omega\big))^{\frac{1}{1-n}}\right)}{|\overline{B(x,r)}|}=0,
	$$
	as desired in \eqref{1.2}.
\end{proof}

\section{Nonvanishing relative $p$-capacities under $\alpha\ge 1-\frac{p}{n}$}\label{s2}

\begin{theorem}\label{t21} Given
	$$
	\alpha\ge 1-\frac{p}{n}\le 1-\frac1n,
	$$
	suppose that not only
  $$
  (\mathbb M^n,g)=\left([0,+\infty)\times \mathbb{S}^{n-1},dt^2+\varphi^2(t)g_{\mathbb S^{n-1}}\right)\ \ \text{with}\ \ \varphi''\geq 0.
  $$
  but also $\Omega$ obeys the isoperimetry of $o=\{t=0\}$-centered balls - namely - if the surface area $|\partial\tilde{K}|$ of any compact subset $\tilde{K}$ of $\Omega$ with the same volume as a geodesic ball $B(o,r)$  is not less than the surface area of $B(o,r)$ -i.e.-
  $$
  |\partial\tilde{K}|\geq |\partial B(o,r)|.
  $$
There hold four nonvanishing results for ${\rm cap}_p(K,\Omega)$.
\begin{itemize}
\item [\rm(i)] If $1\le p<n,$ then
\begin{equation}\label{2.1}
  \inf\limits_{K\subset\subset \Omega}\frac{{\rm cap}_p(K,\Omega)}{|K|^\alpha|\Omega|^{1-\frac{p}{n}-\alpha}}\geq\begin{cases}
  n\nu_n^{\frac{p}{n}}\left(\frac{n-p}{p-1}\right)^{p-1}&\ \ \text{as}\ \ p\not=1;\\
  n\nu_n^\frac1n&\ \ \text{as}\ \ p=1.
  \end{cases}
\end{equation}
with equality being approachable whenever $\alpha=1-\frac{p}{n}$.

\item[\rm(ii)] If $p=n,$ then
\begin{equation}\label{2.2}
\inf\limits_{K\subset\subset \Omega}\frac{\exp\left(-\beta \big({\rm cap}_n(K,\Omega)\big)^{\frac{1}{1-n}}\right)}{|K||\Omega|^{-1}}\geq 1\ \ \forall\ \ \beta\in (0,n^{\frac{n}{n-1}}\nu_n^{\frac{1}{n-1}}],
\end{equation}
with equality being approachable whenever $\beta=n^\frac{n}{n-1}\nu_n^{\frac{1}{n-1}}$.

\item[\rm(iii)] If $n<p<\infty$, then
\begin{equation}\label{2.3}
  \inf\limits_{K\subset\subset \Omega}\frac{{\rm cap}_p(K,\Omega)}{|\Omega|^{1-\frac{p}{n}}}\geq n\nu_n^{\frac{p}{n}}\left(\frac{p-n}{p-1}\right)^{p-1},
\end{equation}
with equality being approachable.

\item[\rm(iv)] If $p=\infty$, then
\begin{equation}\label{2.4}
\inf\limits_{K\subset\subset \Omega}\frac{{\rm cap}_\infty(K,\Omega)}{|\Omega|^{-\frac{1}{n}}}\geq \nu_n^{\frac{1}{n}},
\end{equation}
with equality being approachable.
\end{itemize}

\end{theorem}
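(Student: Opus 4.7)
I propose a two-step Schwarz-type symmetrization that reduces all four parts to explicit Euclidean estimates. Step 1 (isoperimetric rearrangement) uses the hypothesis to compare $\mathrm{cap}_p(K,\Omega)$ to the capacity between the $o$-centered balls of the same volumes as $K$ and $\Omega$. Step 2 (warped-to-Euclidean comparison) exploits the convexity of $\varphi$: since $\varphi(0)=0$ and $\varphi'(0)=1$, the condition $\varphi''\ge 0$ forces $\varphi'\ge 1$, and an integration by parts gives the \emph{key pointwise inequality}
\[ \varphi(r)^n=n\int_0^r\varphi^{n-1}\varphi'\,dt\ge n\int_0^r\varphi^{n-1}\,dt=\frac{V(r)}{\nu_n},\qquad V(r):=|B(o,r)|, \]
equivalently $\varphi(V^{-1}(u))\ge(u/\nu_n)^{1/n}$. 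This allows the warped radial capacity to be bounded below by the Euclidean radial capacity between balls of the \emph{same} volumes as $K$ and $\Omega$, after which the sharp constants in \eqref{2.1}--\eqref{2.4} fall out by short computations.

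\textbf{Parts (i)--(iii).} For $1<p<\infty$, take $f\in C^1_0(\Omega)$ with $0\le f\le 1$ and $f\equiv 1$ on $K$, and set $\mu(s):=|\{f>s\}|$. The coarea formula together with Hölder applied to the identity $1=|\nabla f|^{(p-1)/p}\cdot|\nabla f|^{-(p-1)/p}$ on each level set yields $\int_\Omega|\nabla f|^p\,d\upsilon_g\ge\int_0^1|\{f=s\}|^p\,(-\mu'(s))^{-(p-1)}\,ds$. The isoperimetric hypothesis replaces $|\{f=s\}|$ by $n\nu_n\varphi(V^{-1}(\mu(s)))^{n-1}$, and a second Hölder in the variable $u=\mu(s)$ gives the radial lower bound
\[ \mathrm{cap}_p(K,\Omega)\ge n\nu_n\biggl(\int_{r_K}^{r_\Omega}\varphi(t)^{-(n-1)/(p-1)}\,dt\biggr)^{-(p-1)},\quad r_K=V^{-1}(|K|),\ r_\Omega=V^{-1}(|\Omega|). \]
Substituting $u=V(t)$ and invoking the key pointwise inequality produces $\mathrm{cap}_p(K,\Omega)\ge\mathrm{cap}_p^{\mathbb{R}^n}(B(0,\rho_K),B(0,\rho_\Omega))$ with $\nu_n\rho_K^n=|K|$ and $\nu_n\rho_\Omega^n=|\Omega|$. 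The explicit Euclidean formula then reduces \eqref{2.1} to the elementary algebraic inequality
\[ t^{n\alpha}\bigl(t^{(p-n)/(p-1)}-1\bigr)^{p-1}\le 1,\quad t:=\rho_K/\rho_\Omega\in(0,1),\ \alpha\ge 1-\tfrac{p}{n}, \]
which, for $\alpha=1-p/n$, is monotone decreasing with limit $1$ as $t\to 0^+$ (giving the approachable equality as $|\Omega|/|K|\to\infty$), and strictly smaller for $\alpha>1-p/n$. The case $p=1$ is immediate from a single level set together with the key lemma. For $p=n$, tracking constants through $\exp(-\beta(\cdot)^{1/(1-n)})$ shows that the sharp threshold $\beta=n^{n/(n-1)}\nu_n^{1/(n-1)}$ is exactly what collapses the radial Euclidean estimate to $|K|/|\Omega|$, giving \eqref{2.2}. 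For $p>n$, the Euclidean radial integral converges at $0$, so \eqref{2.3} follows by letting $r_K\to 0$.

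\textbf{Part (iv) and main obstacle.} For $p=\infty$, one has $\mathrm{cap}_\infty(K,\Omega)=1/d(K,\partial\Omega)$ (the distance function from $\partial\Omega$ is extremal). For any $x_0\in K$ and $\sigma<d(K,\partial\Omega)$, $B(x_0,\sigma)\subset\Omega$, so the isoperimetric hypothesis applied to $B(x_0,\sigma)$ yields the differential inequality $v'(\sigma)\ge n\nu_n\varphi(V^{-1}(v(\sigma)))^{n-1}$ for $v(\sigma):=|B(x_0,\sigma)|$; separating variables integrates this to $v(\sigma)\ge V(\sigma)\ge\nu_n\sigma^n$, and sending $\sigma\nearrow d(K,\partial\Omega)$ produces $|\Omega|\ge\nu_n d(K,\partial\Omega)^n$, i.e.\ \eqref{2.4}, with equality approached by the origin-centered Euclidean ball. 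The main technical obstacle throughout is the second Hölder optimization in Step 1: one must execute the rearrangement with sufficient care that the sharp Euclidean constants emerge without loss. Once that is achieved, Step 2 is the clean geometric reason why all constants in \eqref{2.1}--\eqref{2.4} match their Euclidean analogues and why equality is saturated in the Euclidean limit $\varphi(t)=t$.
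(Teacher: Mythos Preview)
Your proof is correct and takes a genuinely different route from the paper in the second step.

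\textbf{Where you agree.} Step 1 is the same: both you and the paper reduce to the annular capacity $\mathrm{cap}_p\big(\overline{B(o,r)},B(o,R)\big)$ via Maz'ya's isocapacitary inequality (your coarea-plus-two-H\"older derivation is exactly this).

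\textbf{Where you differ.} After the reduction, the paper works \emph{directly on the warped manifold}: for each of (i)--(iii) it introduces a function of $r$ (resp.\ $R$) and proves monotonicity by differentiating twice and invoking $\varphi''\ge 0$ at the innermost step; the sharp constants then emerge as limiting values via L'H\^opital. You instead perform a \emph{pointwise warped-to-Euclidean comparison}: the single inequality $\varphi(V^{-1}(u))\ge(u/\nu_n)^{1/n}$, after the substitution $u=V(t)$, bounds the warped annular capacity below by the Euclidean annular capacity between balls of the same volumes, and the sharp constants then drop out of a one-line algebraic check in $t=\rho_K/\rho_\Omega$.

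\textbf{What each buys.} Your approach is more transparent about \emph{why} the constants are exactly the Euclidean ones, and it requires only $\varphi'\ge 1$ rather than $\varphi''\ge 0$; in fact your argument extends the paper's own remark (that $\varphi'\ge 1$ suffices for \eqref{2.2}) to all four parts. The paper's ODE-style monotonicity, on the other hand, yields the exact value of the infimum for a fixed $\Omega=B(o,R)$, not just the universal lower bound. For (iv), the paper simply passes to the limit $p\to\infty$ in (iii), whereas your Bishop--Gromov-type differential inequality $v'(\sigma)\ge n\nu_n\varphi(V^{-1}(v(\sigma)))^{n-1}$ is an independent and self-contained argument.

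\textbf{One small gap.} You address the approachability of equality carefully only in part (i). In the paper, equality in (ii) is approached as $r\to R$ (so that both the numerator and $|K|/|\Omega|$ tend to $1$), and in (iii)--(iv) as $r<R\to 0$ (so that the warped geometry becomes Euclidean to leading order). Your concluding sentence about the ``Euclidean limit $\varphi(t)=t$'' gestures at this but does not quite say it; for a fixed non-Euclidean $\varphi$ one needs the small-radius asymptotics $\varphi(t)\sim t$, not a change of manifold.
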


\begin{proof} (i)
	We only need to consider
	$$
\eqref{2.1}\ \ \text{under}\ \ \alpha=1-\frac{p}{n}<1-\frac1n,
	$$
	thanks to
	$$\frac{{\rm cap}_p(K,\Omega)}{|K|^\alpha|\Omega|^{1-\frac{p}{n}-\alpha}}=\left(\frac{{\rm cap}_p(K,\Omega)}{|K|^{1-\frac{p}{n}}}\right)\left(\frac{|\Omega|^{\alpha-(1-\frac{p}{n})}}{|K|^{\alpha-(1-\frac{p}{n})}}\right)\geq\frac{{\rm cap}_p(K,\Omega)}{|K|^{1-\frac{p}{n}}}.
	$$
	For any compact set $K$ in $\Omega,$ assume that
	$$|K|=\big|\overline{B(o,r)}\big|\ \ \&\ \ \Omega=|B(o,R)|\ \ \text{for some $r<R$},
	$$
	where $B(o,r)$ \& $B(o,R)$ are geodesic balls centered at $o=\{t=0\}.$
	
	Since $(\mathbb M^n,g)$ satisfies the isoperimetry of $o=\{t=0\}$-centered balls, there is the isoperimetric function:
	$$
	\begin{cases}
	I(\tau)=\inf\Big\{|\partial K|:\ K\ \text{is precompact open in}\ \Omega\ \&\ |K|\geq\tau\Big\}=|\partial B(o,t)|;\\
	\text{$t$ is determined by $|B(o,t)|=\tau.$}
	\end{cases}
	$$
	Consequently, Maz’ya’s isocapacitary inequalities within either \cite[(2.2.8)]{Maz6} or \cite{grigor1999isoperimetric} indicates
	$$\begin{aligned}
	{\rm cap}_p(K,\Omega)&\geq\left(\int_{|K|}^{|\Omega|} \big(I(\tau)\big)^{\frac{p}{1-p}}d\tau\right)^{1-p}
	\\&=\left(\int_{r}^{R} \big(I(|B(o,t)|)\big)^{\frac{p}{1-p}}\left(\frac{d\tau}{dt}\right)\,dt\right)^{1-p}
	\\&=\left(\int_{r}^{R} |\partial B(o,t)|^{\frac{p}{1-p}}|\partial B(o,t)|\, dt\right)^{1-p}
	\\&={\rm cap}_p\big(\overline{B(o,r)},B(o,R)\big)
	\end{aligned}
	$$
This in turn implies
	$$\begin{aligned}
	\frac{{\rm cap}_p(K,\Omega)}{|K|^{\alpha}}&\geq\frac{{\rm cap}_p\big(\overline{B(o,r)},B(o,R)\big)}{|\overline{B(o,r)}|^{\alpha}}\\
	&=\frac{\left(\int_r^R S_t^{\frac{1}{1-p}}dt\right)^{1-p}}{|\overline{B(o,r)}|^{\alpha}}
	\\ &=\left(\frac{\int_r^R S_t^{\frac{1}{1-p}}dt}{\big(\int_0^r S_t\,dt\big)^{\alpha(1-p)^{-1}}}\right)^{1-p}\\
	&=
	\left(\frac{\int_r^R \big(n\nu_n\varphi^{n-1}(t)\big)^{\frac{1}{1-p}}dt}{(\int_0^r \big(n\nu_n\varphi^{n-1}(t)\big)\, dt)^{\alpha(1-p)^{-1}}}\right)^{1-p}
	\\ &=\big(F(r)\big)^{1-p}(n\nu_n)^{1-\alpha},
	\end{aligned}
	$$
	where
	$$
	F(r)=\left(\int_0^r\varphi^{n-1}(t)\, dt\right)^{\frac{\alpha}{p-1}}\int_r^R \varphi^{\frac{n-1}{1-p}}(t)\, dt.
	$$
\begin{itemize}
	\item On the one hand, upon noticing that
	$$\varphi(r)\sim r\ \ \text{as}\ \ r\rightarrow 0,
	$$
	we get
	$$\begin{aligned}
	F(0)&=\lim\limits_{r\rightarrow 0} F(r)\\
	&=\lim\limits_{r\rightarrow 0}\frac{-\varphi^{\frac{n-1}{1-p}}(r)}{\alpha(1-p)^{-1}\left(\int_0^r\varphi^{n-1}(t)\, dt\right)^{\alpha(1-p)^{-1}-1}\varphi^{n-1}(r)}
	\\ &=\lim\limits_{r\rightarrow 0}\frac{-r^{\frac{n-1}{1-p}}}{\alpha(1-p)^{-1}\left(\int_0^r\varphi^{n-1}(t)\, dt\right)^{\alpha(1-p)^{-1}-1}r^{n-1}}\\
	&=\alpha^{-1}({p-1})\lim\limits_{r\rightarrow 0}\frac{r^{\frac{(n-1)p}{1-p}}}{\left(\int_0^r\varphi^{n-1}(t)\, dt\right)^{\alpha(1-p)^{-1}-1}}
	\\ &=\Bigg(\frac{n(p-1)}{n-1}\Bigg)\left(\lim\limits_{r\rightarrow 0}\frac{r^n}{\int_0^r\varphi^{n-1}(t)\, dt}\right)^{\frac{(n-1)p}{n(1-p)}}\\
	&=\Bigg(\frac{n(p-1)}{n-1}\Bigg)n^{\frac{(n-1)p}{n(1-p)}}.
	\end{aligned}$$

	\item On the other hand, we can prove that $F(r)$ obtains its maximum at $r=0.$ In order to achieve this,
	we compute
	$$
	F'(r)=\left(\int_0^r\varphi^{n-1}(t)\, dt\right)^{\frac{\alpha}{p-1}-1}\big(\varphi(r)\big)^{n-1}G(r),
	$$
	where
	$$G(r)={\alpha}{(p-1)^{-1}}\int_r^R\varphi^{\frac{n-1}{1-p}}(t)\, dt-\varphi^{\frac{(n-1)p}{1-p}}(r)\int_0^r\varphi^{n-1}(t)\, dt
	$$
	satisfies
	$$
	\begin{cases}
	G(R)<0;\\
	G'(r)=\Big(\frac{(n-1)p}{p-1}\Big)\big(\varphi(r)\big)^{\frac{n-1}{1-p}-n}H(r);\\
	H(r)=\varphi'(r)\int_0^r\varphi^{n-1}(t)\, dt-{n}^{-1}\varphi^n(r).
	\end{cases}
	$$
	That
	$$
	H(0)=0\ \ \&\ \ H'(r)=\varphi''(r)\int_0^r\varphi^{n-1}(t)\, dt\geq 0
	$$
	would imply that
	$$H(r)\geq 0\ \ \&\ \ G(r)\leq G(R)<0.
	$$
	As a consequence, we achieve that not only $F'(r)\leq 0$ but also  $F(r)$ obtains its maximum at $r=0.$ Accordingly, there holds
	$$
	\inf\limits_{K\subset\subset\Omega}\frac{{\rm cap}_p(K,\Omega)}{|K|^{1-\frac{p}{n}}}\geq\inf\limits_{0<r<R}\big(F(r)\big)^{1-p}(n\nu_n)^{1-({1-\frac{p}{n}})}
	=\big({F}(0)\big)^{1-p}(n\nu_n)^{\frac{p}{n}}.
	$$
\end{itemize}

	In the end, from the above argument we know that the above-verified estimation is optimal in the sense that if
	$$
	(K,\Omega)=\big(\overline{B(o,r)},B(o,R)\big)\ \ \&\ \ r\rightarrow 0.
	$$
then \eqref{2.1}'s inequality approaches an equality whenever $\alpha=1-\frac{p}{n}$.

	(ii) Assume that
	$$|K|=\big|\overline{B(o,r)}\big|\ \ \&\ \ \Omega=|B(o,R)|\ \ \text{for some $r<R$}.
	$$
	Then
	$$\begin{aligned}
	\frac{\exp\left(-\beta\big({\rm cap}_n(K,\Omega)\big)^{\frac{1}{1-n}}\right)}{|K|}&\geq\frac{\exp\left(-n^{\frac{n}{n-1}}\nu_n^{\frac{1}{n-1}}\int_r^R \big(n\nu_n\varphi^{n-1}(t)\big)^{\frac{1}{1-n}}dt\right)}{\big|\overline{B(o,r)}\big|}
	\\& =\frac{\exp\Big(-n\int_r^R\frac{dt}{\varphi(t)}\Big)}{\int_0^rn\nu_n\varphi^{n-1}(t)\, dt}\\
	&=\frac{1}{n\nu_n\bar{F}(r)},
	\end{aligned}
	$$
	where
	$$
	\bar{F}(r)=\exp\left(n\int_r^R\frac{dt}{\varphi(t)}\right)\int_0^r\varphi^{n-1}(t)\, dt.
	$$
	
	We are going to show that $\bar{F}(r)$ is an increasing function for $r\in[0,R].$ Notice that not only
	$$
	\bar{F}'(r)=\exp\left(n\int_r^R\frac{dt}{\varphi(t)}\right)\frac{1}{\varphi(r)}\left(\varphi^n(r)-n\int_0^r\varphi^{n-1}(t)\, dt\right)
	$$
	but also
	$$
	\bar{F}'(r)\geq 0\Leftrightarrow\left(\varphi^n(r)-n\int_0^r\varphi^{n-1}(t)\, dt\right)\geq 0,
	$$
	whose right-hand inequality holds provided $\varphi'\geq 1$ which holds as $\varphi''\geq 0.$ Hence, we obtain
	$$
	\inf\limits_{K\subset\subset\Omega}|K|^{-1}{\exp\left(-\beta \big({\rm cap}_n(K,\Omega)\big)^{\frac{1}{1-n}}\right)}\geq
	\inf\limits_{0<r<R}\frac{1}{n\nu_n\bar{F}(r)}\geq \frac{1}{n\nu_n\bar{F}(R)}
	={|\Omega|}^{-1}.
	$$
	
	From the proof we know that the above-verified inequality is optimal in the sense that if
	$$
	\begin{cases}
\beta=n^{\frac{n}{n-1}}\nu_n^{\frac{1}{n-1}};\\
(K,\Omega)=\big(\overline{B(o,r)},B(o,R)\big);\\
r\rightarrow R,
\end{cases}
$$
then \eqref{2.2}'s inequality approaches an equality.

(iii) Assume that $$|K|=\big|\overline{B(o,r)}\big|\ \ \&\ \ \Omega=|B(o,R)|\ \ \text{for some $r<R.$}
$$
Then
	$$\begin{aligned}
	{\rm cap}_p(K,\Omega)|\Omega|^{\frac{p}{n}-1}&\geq \left(\int_r^R \big(n\nu_n\varphi^{n-1}(t)\big)^{\frac{1}{1-p}}dt\right)^{1-p}|\Omega|^{\frac{p}{n}-1}
	\\&\geq \left(\int_0^R \big(n\nu_n\varphi^{n-1}(t)\big)^{\frac{1}{1-p}}dt\right)^{1-p}\left(\int_0^R n\nu_n\varphi^{n-1}(t)\, dt\right)^{\frac{p}{n}-1}
	\\&=(n\nu_n)^{\frac{p}{n}} \big(\tilde{F}(R)\big)^{1-p},
	\end{aligned}
	$$
	where
	$$
	\tilde{F}(R)=\left(\int_0^R \varphi^{n-1}(t)\, dt\right)^{-\frac{p-n}{n(p-1)}}\int_0^R \varphi^{\frac{n-1}{1-p}}(t)\, dt.
	$$
	\begin{itemize}
		\item
	On the one hand, there holds
	$$\begin{aligned}
	\tilde{F}(0)&=\lim\limits_{R\rightarrow 0}\left(\frac{\varphi^{\frac{n-1}{1-p}}(R)}{\frac{p-n}{n(p-1)}\left(\int_0^R \varphi^{n-1}(t)\, dt\right)^{\frac{p-n}{n(p-1)}-1}\varphi^{n-1}(R)}\right)
	\\&=\left(\frac{n(p-1)}{p-n}\right)\lim\limits_{R\rightarrow 0}\left(\frac{R^{\frac{n-1}{1-p}}}{\left(\frac{R^n}{n}\right)^{\frac{p-n}{n(p-1)}-1}R^{n-1}}\right)\\
	&=\left(\frac{p-1}{p-n}\right)n^{\frac{p-n}{n(p-1)}}.
	\end{aligned}
	$$

\item	On the other hand, by a similar analysis to (i), we can get that $\tilde{F}'\leq 0$ provided $\varphi''\geq 0.$ Then
	$$
	\inf\limits_{K\subset\subset\Omega}{\rm cap}_p(K,\Omega)|\Omega|^{\frac{p}{n}-1}\geq(n\nu_n)^{\frac{p}{n}} \big(\tilde{F}(0)\big)^{1-p}=
	n\nu_n^{\frac{p}{n}}\left(\frac{p-n}{p-1}\right)^{p-1}.
	$$
\end{itemize}

	Of course, \eqref{2.3}'s equality is approachable in the sense that if
	$$(K,\Omega)=\big(\overline{B(o,r)},B(o,R)\big)\ \ \&\ \ r<R\rightarrow 0,
	$$
	then \eqref{2.3}'s inequality tends to an equality.

	(iv) \eqref{2.4} follows from \cite[Theorem 1]{JXY} \& \eqref{2.3} via
	$$
	\inf\limits_{K\subset\subset \Omega}\left(\frac{{\rm cap}_p(K,\Omega)}{|\Omega|^{1-\frac{p}{n}}}\right)^\frac1p\geq n^\frac1p\nu_n^{\frac{1}{n}}\left(\frac{p-n}{p-1}\right)^\frac{p-1}{p}\to \nu_n^\frac1n\ \ \text{as}\ \ p\to\infty,
	$$
	
	Similary, \eqref{2.4}'s equality is approachable in the sense that if
	$$(K,\Omega)=\big(\overline{B(o,r)},B(o,R)\big)\ \ \&\ \ r<R\rightarrow 0,
	$$
	then \eqref{2.4}'s inequality tends to an equality.
	
\end{proof}

\begin{remark} Below are six comments.
\begin{itemize}

\item Under the hypothesis that $\Omega$ enjoys the isoperimetry of $o=\{t=0\}$-centered balls, we can obtain the following stronger isocapacitary $(1,\infty)\ni p$-inequality in a similar way to establish \cite[Theorem 2.1]{JX}:
$$
 \exists\ \tau_0\in (0,1)\ \text{such that}
 \ \left(\frac{{\rm cap}_{p}(K,\Omega)}{{\rm cap}_{p}\big(\overline{B(o,r)},B(o,R)\big)}\right)^{\frac{1}{p}}\geq \frac{|\partial D_{\tau_0}|}{|\partial B(o,t_0)|}\geq 1,
$$
where
	$$
	\begin{cases}
	D_\tau=\{x\in \Omega\setminus K:\ u(x)\ge \tau\}\cup K;\\
 u\ \text{is the $p$-capacitary potential of $(K,\Omega)$};\\
 t_0 \ \text{is determined by}\ |B(o,t_0)|=|D_{\tau_0}|.
    \end{cases}
    $$

\item In \eqref{2.2}, the condition $``\varphi''\geq 0"$ can be weakened to $``\varphi'\geq 1."$

\item From the geometric point of view, if we let not only $\mathrm{Sec}[g]$ but also $\mathrm{Ric}[g]$ denote not only the sectional curvature of $(\mathbb M^n,g)$ but also the Ricci curvature of $(\mathbb M^n,g),$ then
  $$\varphi''\geq 0\Leftrightarrow \mathrm{Sec}[g]\leq 0\Leftrightarrow {\mathrm{Ric}}[g]\leq 0.$$

\item If
$$
\varphi(t)=t\ \ \text{or}\ \  \varphi(t)=\sinh t,
$$
then $\mathbb M^n$ is the Euclidean space $\mathbb R^n$ or the hyperbolic space $\mathbb H^{n}$. Thus, Theorem \ref{t21} holds for these two special manifolds; see also \cite{AX, JX} for certain of the related results.

\item
According to \cite[Lemma 2.1]{GU} (cf. \cite{K}), if
$$
\begin{cases}
\mathbb M^n=\mathbb R^n;\\n-1<p \leq n;\\
\mathrm{diam}(K)=\text{the diameter of $K$},
\end{cases}
$$
then there is a constant $c_{p,n}>0$ depending on $\{p,n\}$ such that
$$
\frac{{\rm cap}_p(K,\Omega)}{(\mathrm{diam}(K))^\frac{p}{n-1}|\Omega|^\frac{{ n-1-p}}{n-1}}\ge c_{p,n}.
$$
Obviously, this last inequality can be treated as a rough variant of the special case $\alpha=\frac{p}{n(n-1)}$ of \eqref{2.1} {with $p\in(n-1,n).$ While for $p=n,$ \eqref{2.2} is stronger than the last rough inequality.}

\item Theorem \ref{t21} is a local version of the capacity-volume estimates. So, it is natural to consider the global version by setting not only
  $$
  \Omega\rightarrow \mathbb M^n=[0,+\infty)\times \mathbb{S}^{n-1},
  $$
but also
 $$
  \begin{cases}
  \widetilde{{\rm cap}}_p(K)=\inf\limits_{\Omega\subseteq\mathbb M^n}{\rm cap}_p(K,\Omega)&\ \  \text{as}\ \  p\in [1,n);\\
  \widetilde{{\rm cap}}_n(K)=\inf\limits_{\Omega\subseteq\mathbb M^n}|\Omega|\exp\left(-n^{\frac{n}{n-1}}\nu_n^{\frac{1}{n-1}} \big({\rm cap}_n(K,\Omega)\big)^{\frac{1}{1-n}}\right)&\ \ \text{as}\ \ p=n;\\
  \widetilde{{\rm cap}}_p(K)=\inf\limits_{\Omega\subseteq\mathbb M^n}|\Omega|^{\frac{p}{n}-1}{\rm cap}_p(K,\Omega)&\ \ \text{as}\ \ p\in (n,\infty);\\
  \widetilde{{\rm cap}}_\infty(K)=\inf\limits_{\Omega\subseteq\mathbb M^n}|\Omega|^{\frac{1}{n}}{\rm cap}_\infty(K,\Omega)&\ \ \text{as}\ \ p=\infty.
  \end{cases}
  $$
  Accordingly, we have the global $p$-isocapacitary inequalities below:
  	\begin{equation*}
  	\begin{cases}
  	\inf\limits_{K\subset\subset\mathbb M^n}\frac{\widetilde{\rm cap}_p(K)}{|K|^{1-\frac{p}{n}}}\geq
  	n\nu_n^{\frac{p}{n}}\left(\frac{n-p}{p-1}\right)^{p-1}&\ \ \text{as}\ \  p\in [1,n);\\
  	\inf\limits_{K\subset\subset\mathbb M^n}\frac{\widetilde{{\rm cap}}_n(K)}{|K|}\geq 1&\ \ \text{as}\ \  p=n;\\
  	\inf\limits_{K\subset\subset\mathbb M^n}\widetilde{{\rm cap}}_p(K)\geq n\nu_n^{\frac{p}{n}}\left(\frac{p-n}{p-1}\right)^{p-1}&\ \ \text{as}\ \  p\in (n,\infty);\\
  	\inf\limits_{K\subset\subset\mathbb M^n}\widetilde{{\rm cap}}_\infty(K)\geq \nu_n^{\frac{1}{n}}&\ \ \text{as}\ \  p=\infty;
  	\end{cases}
  	\end{equation*}
  thereby obtaining the generalized isoperimetric inequality (cf. \cite[Lemma 2.2.5]{Maz6})
  $${\widetilde{\rm cap}}_1(K)=\inf\Big\{|\Sigma|:\ \text{all admissible surfaces $\Sigma$ enclosing $\partial K$}\Big\}\geq n\nu_n^{\frac{1}{n}}|K|^{1-\frac{1}{n}}.
  $$
\end{itemize}
  \end{remark}

\section{Applying $\alpha=1-\frac{p}{n}$ to sharp weak $(p,q)$-imbeddings}\label{s3}
\begin{theorem}\label{t31}
	Let
	$$
	\begin{cases}
	(\mathbb M^n,g)=\left([0,+\infty)\times \mathbb{S}^{n-1},dt^2+\varphi^2(t)g_{\mathbb S^{n-1}}\right)\ \ \text{with}\ \ \varphi''\geq 0;\\
	\text{$\Omega=$ a bounded domain in $\mathbb M^n$ with the isoperimetry of $o=\{t=0\}$-centered balls};\\
	c_{n,p}=
     \begin{cases}
            n^{-1}\nu_n^{-\frac{1}{n}} \ &\ \text{as} \ p=1;\\
            n^{-\frac{1}{p}}\nu_n^{-\frac{1}{n}}\left(\frac{n-p}{p-1}\right)^{\frac{1-p}{p}} \ &\ \text{as}\ p\in(1,n);\\
            n^{-1}\nu_n^{-\frac{1}{n}} \ &\ \text{as} \ p=n;\\
            n^{-\frac{1}{p}}\nu_n^{-\frac{1}{n}}\left(\frac{p-n}{p-1}\right)^{\frac{1-p}{p}} \ &\ \text{as}\ p\in(n,\infty);\\
            \nu_n^{-\frac{1}{n}} \ &\ \text{as}\ p=\infty.
      \end{cases}
            \end{cases}
	$$
Then there hold four sharp weak $(p,q)$-imbeddings.
\begin{itemize}
    \item [\rm(i)] If $1\le p<n,$ then
        \begin{equation}\label{3.1}
        \|u\|_{\frac{np}{n-p},\infty}\leq c_{n,p}\|\nabla u\|_p
           \ \ \ \forall \ u \in W^{1,p}_0(\mathbb M^n).
        \end{equation}
        with equality being approachable.

    \item[\rm(ii)] If $p=n,$ then
        \begin{equation}\label{3.2}
        \sup\limits_{\lambda>0}\lambda\left(\ln\bigg(\frac{|\Omega|}{\big|\{x\in\Omega:\ |u(x)|\geq\lambda\}\big|}\bigg)\right)^{\frac{1}{n}-1}\leq c_{n,n}\|\nabla u\|_n
           \ \ \ \forall \ u \in W^{1,n}_0(\Omega).
        \end{equation}
        with equality being approachable.

    \item[\rm(iii)] If $p\in(n,\infty),$ then
        \begin{equation}\label{3.3}
    \|u\|_\infty\leq c_{n,p}|\Omega|^{\frac{p-n}{np}}\|\nabla u\|_p\ \ \ \ \forall  \ u \in W^{1,p}_0(\Omega).
        \end{equation}with equality being approachable.
    \item[\rm(iv)] If $p=\infty$, then
     \begin{equation}\label{3.4}
    \|u\|_\infty\leq c_{n,\infty}|\Omega|^{\frac{1}{n}}\|\nabla u\|_\infty\ \ \ \ \forall  \ u \in W^{1,\infty}_0(\Omega).
        \end{equation}with equality being approachable.
\end{itemize}
\end{theorem}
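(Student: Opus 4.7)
\emph{Plan.} The strategy is to pair the capacity--volume estimates of Theorem~\ref{t21} with the classical Maz'ya truncation upper bound on $p$-capacity. Given $u\in C_0^1(\Omega)$ and $\lambda>0$, set $K_\lambda=\{x\in\Omega:|u(x)|\ge\lambda\}$ and test the condenser $(K_\lambda,\Omega)$ with the Lipschitz function $f_\lambda=\min(|u|/\lambda,1)$, which equals $1$ on $K_\lambda$ and obeys $|\nabla f_\lambda|\le\lambda^{-1}|\nabla u|$ pointwise. This yields the universal bounds
\begin{equation*}
{\rm cap}_p(K_\lambda,\Omega)\le \lambda^{-p}\|\nabla u\|_p^p\quad(1\le p<\infty),\qquad {\rm cap}_\infty(K_\lambda,\Omega)\le \lambda^{-1}\|\nabla u\|_\infty,
\end{equation*}
which a density argument extends from $C_0^1$ to $W^{1,p}_0$.

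\emph{Case-by-case inversion.} Combining the above upper bound with the matching lower bound from Theorem~\ref{t21} and solving for $\lambda$ gives each of \eqref{3.1}--\eqref{3.4}. In case (i), taking $\alpha=1-p/n$ in \eqref{2.1} delivers ${\rm cap}_p(K_\lambda,\Omega)\ge n\nu_n^{p/n}\bigl(\tfrac{n-p}{p-1}\bigr)^{p-1}|K_\lambda|^{1-p/n}$ (with the corresponding constant when $p=1$); combining with the upper bound, extracting $\lambda|K_\lambda|^{(n-p)/(np)}$, and taking $\sup_\lambda$ produces \eqref{3.1} with $q=np/(n-p)$; for $u\in W^{1,p}_0(\mathbb M^n)$ one applies this with $\Omega=B(o,R)$ large enough to contain $\mathrm{supp}\,u$, which has the $o$-centered isoperimetry by symmetry, and observes that the choice $\alpha=1-p/n$ removes the $|\Omega|$ factor so the bound is $R$-independent. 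In case (ii), \eqref{2.2} rearranges to ${\rm cap}_n(K_\lambda,\Omega)\ge n^n\nu_n\bigl(\ln(|\Omega|/|K_\lambda|)\bigr)^{1-n}$, and the same procedure delivers \eqref{3.2}. In cases (iii)--(iv), the lower bounds in \eqref{2.3}--\eqref{2.4} depend only on $|\Omega|$, so inserting them into the upper bound yields $\lambda$-estimates uniform in $K_\lambda$, and letting $\lambda\uparrow\|u\|_\infty$ gives \eqref{3.3}--\eqref{3.4}.

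\emph{Sharpness and main obstacle.} The natural extremizers are (mollifications of) the $p$-capacitary potentials $u_{r,R}$ of the condenser $(\overline{B(o,r)},B(o,R))$ on the warped product $(\mathbb M^n,g)$. Such a potential saturates the Maz'ya truncation inequality (it is the energy minimizer) while simultaneously, via the equality cases identified in Theorem~\ref{t21}, its capacity--volume ratio approaches the optimal constant in the limits $r\to 0$ for (i), (iii), (iv), and $r\to R$ for (ii). The principal bookkeeping challenge is to drive both inequalities to equality along a single sequence: for each $u_{r,R}$ one must pick a level $\lambda$ whose superlevel set is essentially $\overline{B(o,r)}$ and then propagate the asymptotics of $\|\nabla u_{r,R}\|_p$, $|K_\lambda|$, and $|\Omega|$, as computed via the $F$, $\bar F$, and $\tilde F$ integrals in Theorem~\ref{t21}'s proof, so that the combined ratio converges precisely to $c_{n,p}$.
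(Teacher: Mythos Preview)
Your proposal is correct and follows essentially the same route as the paper: bound ${\rm cap}_p(K_\lambda,\Omega)$ from above via the truncation $\min(|u|/\lambda,1)$ and from below via Theorem~\ref{t21}, then saturate both inequalities with radial $p$-capacitary potentials of $(\overline{B(o,r)},B(o,R))$ evaluated at the level $\lambda=1$. The only details to watch in the sharpness arguments are that the $p=1$ case needs a separate piecewise-linear test function (the potential formula $\varphi^{(n-1)/(1-p)}$ degenerates), and that cases (ii)--(iv) require an additional limit $R\to 0$ on top of the limit in $r$ you identify.
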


\begin{proof}
    In the sequel, we always assume that not only
    $$
\begin{cases}
u \in W^{1,p}_0(\mathbb M^n)&\text{as}\ \ p\in [1,n)\ \text{via letting}\ \ \Omega\to\mathbb M^n;\\
u \in W^{1,p}_0(\Omega)&\text{as}\ \ p\in [n,\infty],
\end{cases}
    $$
    but also
    $$
    [u]_\lambda=\big\{x\in\mathbb M^n\ \text{or}\ \Omega:\ |u(x)|\geq \lambda\big\}\ \ \text{with}\ \lambda^{-1}|u(\cdot)|\geq \mathrm{1}_{[u]_\lambda}\ \ \forall\ \ \lambda\in (0,\infty).
    $$

    (i) Via choosing $\alpha=1-\frac{p}{n}$ within \eqref{2.1}, we can obtain
    $$\begin{aligned}
   c_{n,p}^{-p}\big|[u]_\lambda\big|^{1-\frac{p}{n}}&\leq\inf\limits_{\Omega\subseteq\mathbb M^n}{\rm cap}_p([u_\lambda],\Omega)
    \\&\leq\int_{\mathbb M^n}|\nabla (\lambda^{-1}|u|)|^p d\upsilon_g
    \\&=\lambda^{-p}\int_{\mathbb M^n}|\nabla u|^p d\upsilon_g
    \end{aligned}
    $$
    Hence \eqref{3.1} follows from
    $$
    \lambda\big|[u]_\lambda\big|^{\frac{n-p}{np}}\leq c_{n,p}\|\nabla u\|_p\ \ \forall\ \  \lambda>0.
    $$

    However, regarding the \eqref{3.1}'s sharpness in the sense that $c_{n,p}$ is approachable, we deal with two cases.

    \begin{itemize}
    	\item If $1<p<n$, then we construct the $(1,n)\ni p$-capacitary potential of a geodesic ball $\overline{B(o,r)}$ according to the formula
    $$
    u_{p,r}(x)=\begin{cases}
        1 &\ \text{as}\ \ x\in \overline{B(o,r)};\\
        1-\frac{\int_r^{d(o,x)}\varphi^{\frac{n-1}{1-p}}(s)\, ds}{\int_r^{\infty}\varphi^{\frac{n-1}{1-p}}(s)\, ds} & \ \ \text{as}\ x\in\mathbb{M}^n\setminus\overline{B(o,r)},
    \end{cases}
    $$
    where not only $d(o,\cdot)$ denotes the distance function of $o$ but also the infinite integral is convergent due to $\varphi''\geq 0,$ thereby getting that not only
    $$
    \|u_{p,r}\|_{\frac{np}{n-p},\infty}\geq \big|[u_{p,r}]_1\big|^{\frac{1}{p}-\frac{1}{n}}=\big|\overline{B(o,r)}\big|^{\frac{1}{p}-\frac{1}{n}}
    $$
    but also
    \begin{align}\label{3.5}
    \|\nabla u_{p,r}\|_p^p&=\left(\int_r^{\infty}\varphi^{\frac{n-1}{1-p}}(s)\, ds\right)^{-p}\int_{\mathbb{M}^n}|\varphi^{\frac{n-1}{1-p}}(d(o,x))\nabla d(o,x)|^p\, d\upsilon_g(x)
    \\
    &=\left(\int_r^{\infty}\varphi^{\frac{n-1}{1-p}}(s)\, ds\right)^{-p}\int_r^\infty|\varphi^{\frac{n-1}{1-p}}(s)|^p n\nu_n\varphi^{n-1}(s)\, ds
    \notag\\
    &= \widetilde{{\rm cap}}_p\big(\overline{B(o,r)}\big).\notag
    \end{align}
    Consequently, we achieve
    $$
    \frac{\|u_{p,r}\|_{\frac{np}{n-p},\infty}}{\|\nabla u_{p,r}\|_p}\geq\frac{\big|\overline{B(o,r)}\big|^{\frac{n-p}{np}}}{\Big(\widetilde{{\rm cap}}_p\big(\overline{B(o,r)}\big)\Big)^{\frac{1}{p}}}\rightarrow \left( n\nu_n^{\frac{p}{n}}\left(\frac{n-p}{p-1}\right)^{p-1}\right)^{-\frac{1}{p}}=c_{n,p}\ \ \text{as}\ \ r\rightarrow0
    $$
    thereby finding that \eqref{3.1}'s equality for $p\in (1,n)$ is approachable.

    \item If $p=1$, then we choose
      $$
    u_{1,r}(x)=\begin{cases}
        1 &\ \text{as} \ x\in \overline{B(o,r)};\\
        1+r^{-2}\big(r-d(o,x)\big) & \ \text{as}\ x\in  \overline{B(o,r+r^2)}\setminus \overline{B(o,r)};\\
        0 & \ \text{as}\ x\in\mathbb{M}^n\setminus \overline{B(o,r+r^2)},
    \end{cases}
    $$
    to calculate
    $$\begin{aligned}
    \liminf\limits_{r\rightarrow 0}\frac{\|u_{1,r}\|_{\frac{n}{n-1},\infty}}{\|\nabla u_{1,r}\|_1}&\geq \lim\limits_{r\rightarrow 0} \frac{|[u_{1,r}]_1|^{1-\frac{1}{n}}}{\int_{\mathbb{M}^n}|\nabla u_{1,r}|\, d\upsilon_g}
    \\ &=\lim\limits_{r\rightarrow 0}\frac{\big|\overline{B(o,r)}\big|^{1-\frac{1}{n}}}{\int_r^{r+r^2}r^{-2} n\nu_n\varphi^{n-1}(t)dt}
    \\ &=\lim\limits_{r\rightarrow 0}\frac{(\nu_n r^n)^{1-\frac{1}{n}}}{\int_r^{r+r^2}r^{-2} n\nu_n t^{n-1}dt}
    \\&=n^{-1}\nu_n^{-\frac{1}{n}}.
    \end{aligned}
    $$
    This just says that \eqref{3.1}'s equality for $p=1$ is approachable.
\end{itemize}

    (ii) According to \eqref{2.2}, for any $\lambda>0$ there holds
    $$\begin{aligned}
    \frac{\big|[u]_\lambda\big|}{|\Omega|}&\leq \exp\left(-n^{\frac{n}{n-1}}\nu_n^{\frac{1}{n-1}}({\rm cap}_n([u_\lambda],\Omega))^{\frac{1}{1-n}}\right)
    \\ &\leq\exp\left(-n^{\frac{n}{n-1}}\nu_n^{\frac{1}{n-1}}\bigg(\int_\Omega|\nabla(\lambda^{-1}|u|)|^nd\upsilon_g\bigg)^{\frac{1}{1-n}}\right)
    \\ &=\exp\left(-n^{\frac{n}{n-1}}\nu_n^{\frac{1}{n-1}}\lambda^{-\frac{n}{1-n}}\bigg(\int_\Omega|\nabla u|^nd\upsilon_g\bigg)^{\frac{1}{1-n}}\right),
    \end{aligned}
    $$
    which is actually equivalent to \eqref{3.2}.

    In order to study the sharpness of \eqref{3.2}, given a geodesic open ball $\Omega=B(o,R)$, we consider the $n$-capacitary potential $u_{n,r}$ of $\big(\overline{B(o,r)},B(o,R)\big)$ for some $r\in(0,R)$ given as below:
        $$
    u_{n,r}(x)=\begin{cases}
        1 &\ \text{as} \ \ x\in \overline{B(o,r)};\\
        1-\frac{\int_r^{d(o,x)}\varphi^{-1}(s)\, ds}{\int_r^{R}\varphi^{-1}(s)\, ds} & \ \text{as}\ \ x\in B(o,R)\setminus \overline{B(o,r)}.
    \end{cases}
    $$
    In a similar way to establish \eqref{3.5}, we have
    $$
    \|\nabla u_{n,r}\|_n^n={\rm cap}_p\big(\overline{B(o,r)},B(o,R)\big)=n\nu_n\left(\int_r^R\varphi^{-1}(s)\, ds\right)^{1-n},
    $$
    whence estimating
    $$\begin{aligned}
    \frac{ \sup\limits_{\lambda>0}\lambda\left(\ln\frac{|B(o,R)|}{\big|[u_{n,r}]_\lambda\big|}\right)^{\frac{1}{n}-1}}{\|\nabla u_{n,r}\|_n}&\geq
    \frac{\left(\ln|B(o,R)|-\ln\big|\overline{B(o,r)}\big|\right)^{\frac{1}{n}-1}}{\left({\rm cap}_p\big(\overline{B(o,r)},B(o,R)\big)\right)^{\frac{1}{n}}}
    \\ &=(n\nu_n)^{-\frac{1}{n}}\left(\frac{\ln|B(o,R)|-\ln\big|\overline{B(o,r)}\big|}{\int_r^R\varphi^{-1}(s)\, ds}\right)^{\frac{1}{n}-1}
    \\ &\rightarrow (n\nu_n)^{-\frac{1}{n}}\left(\frac{n\nu_n\varphi^n(R)}{|B(o,R)|}\right)^{\frac{1}{n}-1} \ \text{as}\  r\rightarrow R
    \\ &\rightarrow n^{-1}\nu_n^{-\frac{1}{n}}\ \text{as}\ R\rightarrow 0.
    \end{aligned}
    $$
    Of course, this tells that \eqref{3.2}'s equality is approachable.

    (iii) In accordance with \eqref{2.3}, for
    $$
    \begin{cases} 0<\lambda<\|u\|_\infty;\\ [u]_\lambda\neq\emptyset;\\
     \big|[u]_\lambda\big|>0,
     \end{cases}
     $$
     we can evaluate
    $$\begin{aligned}
   c_{n,p}^{-p}|\Omega|^{1-\frac{p}{n}}&\leq {\rm cap}_p([u]_\lambda,\Omega)
    \\&\leq\int_{\Omega}|\nabla (\lambda^{-1}|u|)|^p d\upsilon_g
    \\&=\lambda^{-p}\int_{\Omega}|\nabla u|^p d\upsilon_g.
    \end{aligned}
    $$
    So, sending $\lambda$ to $\|u\|_\infty$ in the above estimation gives
    $$\|u\|_\infty\leq c_{n,p}|\Omega|^{\frac{p-n}{np}}\|\nabla u\|_p,
    $$
    as desired in \eqref{3.3}.

    In order to reach \eqref{3.3}'s equality, let
    $$
    \begin{cases}
    \Omega=B(o,R);\\
    u_{p,r}=\text{the $p$-capacitary potential of}\ \big(\{o\},B(o,R)\big) - \text{i.e.}-\\
    u_{p,r}(x)=
        1-\frac{\int_0^{d(o,x)}\varphi^{\frac{n-1}{1-p}}(s)\, ds}{\int_0^{R}\varphi^{\frac{n-1}{1-p}}(s)\, ds} \ \ \forall\ \  x\in B(o,R).
        \end{cases}
    $$
    Then
    $$
    \begin{cases}
   \|u\|_\infty=1;\\
   \|\nabla u\|_p^p=n\nu_n\left(\int_0^R\varphi^{\frac{n-1}{1-p}}(s)\, ds\right)^{1-p}.
  \end{cases}
    $$
    Needless to say, the last two formulas derive
    $$\begin{aligned}
        |\Omega|^{\frac{p-n}{np}}\|\nabla u\|_p&=|B(o,R)|^{\frac{p-n}{np}}(n\nu_n)^{\frac{1}{p}}\left(\int_0^R\varphi^{\frac{n-1}{1-p}}(s)\, ds\right)^{\frac{1-p}{p}}
        \\ &\rightarrow (\nu_nR^n)^{\frac{p-n}{np}}(n\nu_n)^{\frac{1}{p}}\left(({p-1})({p-n})^{-1}R^\frac{p-n}{p-1}\right)^{\frac{1-p}{p}}
        \\ &=c_{n,p}^{-1}\ \ \text{as}\ \ R\rightarrow 0.
    \end{aligned}
    $$
    Therefore, \eqref{3.3}'s equality is approachable.

    (iv) Upon taking into account of \eqref{2.4}, we see that the proof is analogous to that of verifying (iii) so it is omitted here.
\end{proof}
\begin{remark}
By tracing the proof process of Theorem 3.1, we find that if 
$$
\begin{cases}
\varphi(t)=t;\\
\mathbb{M}^n=\mathbb{R}^n;\\
1<p\le\infty;\\
0<r<R<\infty,
\end{cases}
$$ 
the equalities within \eqref{3.1}-\eqref{3.2}-\eqref{3.3}-\eqref{3.4} can be achieved via the next functions
$$u_p(x)=
\begin{cases}
\left(\frac{\max\{|x|,r\}}{r}\right)^{\frac{p-n}{p-1}}\ \ \forall\ \ x\in \mathbb{R}^n&\ \ \text{as}\ \ 1<p<n;\\
 \frac{\ln R-\ln \max\{|x|,r\}}{\ln R-\ln r}\ \ \forall\ \ x\in B(o,R)&\ \ \text{as}\ \ p=n;\\
1-\left(\frac{|x|}{R}\right)^{\frac{p-n}{p-1}}\ \ \forall\ \ x\in B(o,R)&\ \ \text{as}\ \ n<p<\infty;\\
1-\frac{|x|}{R}\ \ \forall\ \ x\in B(o,R)&\  \ \text{as}\ p=\infty.
\end{cases}
$$
For certain of the closely-related results over $\mathbb R^n$, see also \cite{CRT2, CRT1, AX, X2} .
\end{remark}
\section{Applying $\alpha=1>1-\frac{p}{n}$ to principal $p$-frequencies}\label{s4}

\begin{theorem}\label{t41}
	 Let
	$$
	\begin{cases}
(\mathbb M^n,g)=\left([0,+\infty)\times \mathbb{S}^{n-1},dt^2+\varphi^2(t)g_{\mathbb S^{n-1}}\right)\ \ \text{with}\ \ \varphi''\geq 0;\\
\text{$\Omega$ obey the isoperimetry of $o=\{t=0\}$-centered balls.}
\end{cases}
	$$
	Then
	\begin{equation}
	\label{4.1}
\lambda_{1,p}(\Omega)
\geq \begin{cases} n\nu_n^{\frac{1}{n}}|\Omega|^{-\frac1n}&\ \ \text{as}\ \ p=1;\\
n\nu_n^{\frac{p}{n}}p^{-p}(\max\{n,p-n\})^{p-1}|\Omega|^{-\frac {p}{n}} &\ \ \text{as}\ \ p\in (1,\infty);\\
\nu_n^\frac1n|\Omega|^{-\frac1n}&\ \ \text{as}\ \ p=\infty.
\end{cases}
\end{equation}
In particular, \eqref{4.1}'s case at $p=1\ \text{or}\ p=\infty$ is sharp in the sense that
$$\mathbb M^n=\mathbb R^n\ \ \&\ \ \Omega=\mathbb B^n\Rightarrow
\lambda_{1,1}(\Omega)=n\ \ \&\ \ \lambda_{1,\infty}(\Omega)=1.
$$
\end{theorem}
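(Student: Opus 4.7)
The plan is to deploy the capacity-volume estimate of Theorem \ref{t21} at the admissible exponent $\alpha=1$ (valid for every $p\in[1,\infty]$ since $1\ge 1-p/n$), and to read level sets of trial functions as the condensers against which those capacities are measured. The common admissibility observation is that for every $f\in C_0^1(\Omega)$ and every $\lambda>0$, the truncation $\min(\lambda^{-1}|f|,1)$ is admissible for ${\rm cap}_p([f]_\lambda,\Omega)$, whence ${\rm cap}_p([f]_\lambda,\Omega)\le\lambda^{-p}\int_\Omega|\nabla f|^p\,d\upsilon_g$.

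The two extreme values of $p$ are essentially immediate. For $p=1$, Theorem \ref{t21}(i) at $\alpha=1$ says ${\rm cap}_1([f]_\lambda,\Omega)=|\partial[f]_\lambda|\ge n\nu_n^{1/n}|\Omega|^{-1/n}|[f]_\lambda|$; integrating in $\lambda$ (coarea on the left, Cavalieri on the right) produces $\|f\|_1\le(n\nu_n^{1/n})^{-1}|\Omega|^{1/n}\|\nabla f\|_1$, which is the case $p=1$ of \eqref{4.1}. For $p=\infty$, Theorem \ref{t31}(iv) itself reads $\|f\|_\infty\le\nu_n^{-1/n}|\Omega|^{1/n}\|\nabla f\|_\infty$. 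For $p\in(1,\infty)$ the first candidate bound $(n/p)^p\nu_n^{p/n}|\Omega|^{-p/n}=n\nu_n^{p/n}p^{-p}n^{p-1}|\Omega|^{-p/n}$ follows by feeding $g:=|f|^p\in W^{1,1}_0(\Omega)$ into the $p=1$ inequality just proved (since $|\nabla g|=p|f|^{p-1}|\nabla f|$) and then applying H\"older with conjugates $p/(p-1)$ and $p$ to cancel the $\|f\|_p^{p-1}$ factor. This single line settles \eqref{4.1} on the range $p\le 2n$, where $\max\{n,p-n\}=n$.

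For $p>2n$ the maximum is $p-n$ and the stronger bound $n\nu_n^{p/n}p^{-p}(p-n)^{p-1}|\Omega|^{-p/n}$ would be extracted from the Morrey-type inequality Theorem \ref{t31}(iii) (itself a consequence of the $\alpha=1$ instance of Theorem \ref{t21}(iii)): substituting the just-proved $p=n$ bound $\|f\|_n\le\nu_n^{-1/n}|\Omega|^{1/n}\|\nabla f\|_n$ together with $\|\nabla f\|_n\le|\Omega|^{1/n-1/p}\|\nabla f\|_p$ into the $L^\infty$-$L^n$ interpolation $\|f\|_p\le\|f\|_\infty^{1-n/p}\|f\|_n^{n/p}$ and using Morrey on $\|f\|_\infty$, the $|\Omega|$-exponents collapse uniformly to $p/n$. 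The two branches of the $\max$ agree at $p=2n$, so the two regimes patch seamlessly across the threshold. Sharpness at the two endpoints on $(\mathbb R^n,\mathbb B^n)$ follows from explicit test functions: a smooth approximation of $\mathbf{1}_{B(0,1-\varepsilon)}$ realises $\|\nabla f_\varepsilon\|_1/\|f_\varepsilon\|_1\to n$, while the cone $f(x)=(1-|x|)_+$ yields $\|\nabla f\|_\infty/\|f\|_\infty=1$; these match the lower bounds $n\nu_n^{1/n}|\mathbb B^n|^{-1/n}=n$ and $\nu_n^{1/n}|\mathbb B^n|^{-1/n}=1$ respectively.

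The principal obstacle is the $p>2n$ step. The Morrey inequality from Theorem \ref{t31}(iii) naturally delivers a factor of the form $((p-n)/(p-1))^{p-1}$, whereas \eqref{4.1} asks for the lighter $(p-n)^{p-1}p^{-p}$; reconciling the two requires the specific $L^\infty$-$L^n$-$L^p$ interpolation split described above, and the delicate bookkeeping is to verify that the arithmetic of $|\Omega|$-exponents telescopes uniformly to $|\Omega|^{-p/n}$ and that the resulting pre-factor dominates the target $n\nu_n^{p/n}p^{-p}(p-n)^{p-1}$ uniformly in $p$, rather than the structurally different pre-factor that a naive Morrey chain produces.
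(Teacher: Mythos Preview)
Your treatment of $p=1$, $p=\infty$, and the ``$|f|^p$ trick'' for $p\in(1,2n]$ is correct and in fact more self-contained than the paper's route on that range: the paper invokes an external monotonicity lemma (that $p\mapsto p\big(\lambda_{1,p}(\Omega)\big)^{1/p}$ is nondecreasing, from \cite{jin2024lower}) to pass from $\lambda_{1,1}$ to $\lambda_{1,p}$, whereas your substitution $g=|f|^p$ followed by H\"older is entirely elementary and yields the same constant $(n/p)^p\nu_n^{p/n}|\Omega|^{-p/n}$. One cosmetic point: ${\rm cap}_1([f]_\lambda,\Omega)$ equals the \emph{least} perimeter enclosing $[f]_\lambda$, not $|\partial[f]_\lambda|$ in general; but the chain $|\partial[f]_\lambda|\ge n\nu_n^{1/n}|[f]_\lambda|^{1-1/n}\ge n\nu_n^{1/n}|\Omega|^{-1/n}|[f]_\lambda|$ follows directly from the isoperimetric hypothesis together with $\varphi'\ge1$, so your coarea--Cavalieri integration goes through.

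The genuine gap is the $p>2n$ step, and you are right to flag it. Chasing your interpolation $\|f\|_p\le\|f\|_\infty^{1-n/p}\|f\|_n^{n/p}$ combined with Theorem~\ref{t31}(iii) and the H\"older bound on $\|\nabla f\|_n$ does make the $|\Omega|$-exponents telescope to $-p/n$, but the resulting prefactor is
\[
n^{(p-n)/p}\,\nu_n^{p/n}\,\Big(\tfrac{p-n}{p-1}\Big)^{(p-1)(p-n)/p},
\]
and proving that this dominates $n\nu_n^{p/n}p^{-p}(p-n)^{p-1}$ uniformly for $p>2n$ is a separate analytic inequality you have not established. The paper avoids this entirely by using the Maz'ya constant $\gamma_p(\Omega)=\inf_{K\subset\subset\Omega}|K|^{-1}{\rm cap}_p(K,\Omega)$ and its known two-sided comparison
\[
\lambda_{1,p}(\Omega)\le\gamma_p(\Omega)\le p^p(p-1)^{1-p}\lambda_{1,p}(\Omega)
\]
(cf.\ \cite{maz2003lectures,grigor1999isoperimetric}). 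For $p>n$ one simply bounds $\gamma_p(\Omega)\ge|\Omega|^{-1}\inf_{K}{\rm cap}_p(K,\Omega)$ and applies \eqref{2.3}; the factor $\big((p-n)/(p-1)\big)^{p-1}$ from \eqref{2.3} then cancels \emph{exactly} against the $(p-1)^{p-1}p^{-p}$ in the Maz'ya comparison, giving $\lambda_{1,p}(\Omega)\ge n\nu_n^{p/n}p^{-p}(p-n)^{p-1}|\Omega|^{-p/n}$ in one line. That algebraic cancellation, via $\gamma_p$, is the missing idea in your outline.
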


\begin{proof} The sharpness of \eqref{4.1} at $p\in\{1,\infty\}$ follows from \cite[Proposition 3.1 \& Remark 3.4]{X}. Thus, it remains to verify \eqref{4.1}.
	
First of all, recall such an equivalent property of the Maz'ya constant $\gamma_p(\Omega)$ that (cf. \cite{maz2003lectures, grigor1999isoperimetric})
\begin{align}\label{4.2}
\lambda_{1,p}(\Omega)&\leq \gamma_p(\Omega)\\
&\equiv\inf\limits_{K\subset\subset\Omega}|K|^{-1}{{\rm cap}_p(K,\Omega)}\notag\\
&\leq\begin{cases}\lambda_{1,1}(\Omega)&\ \ \text{as}\ \ p=1;\\ {p^p}{(p-1)^{1-p}}\lambda_{1,p}(\Omega)&\ \ \text{as}\ \ p\in (1,\infty).\notag
\end{cases}
\end{align}

Next is a three-fold consideration.

\begin{itemize}
	\item If $p\in [1,n)$, then using Theorem \ref{t21}'s case $\alpha=1$, along with \eqref{4.2}, derives
\begin{equation}
\label{4.3}
\lambda_{1,p}(\Omega)\geq\begin{cases}
 n\nu_n^{\frac{1}{n}}|\Omega|^{-\frac{1}{n}}&\ \text{as}\ \ p=1;\\
{(p-1)^{p-1}}{p^{-p}}\gamma_p(\Omega)\geq
  n\nu_n^{\frac{p}{n}}{(n-p)^{p-1}}{p^{-p}}|\Omega|^{-\frac{p}{n}}&\ \text{as}\ \ p\in(1,n).
   \end{cases}
\end{equation}

\item However, thanks to \cite[Theorem 1.2]{jin2024lower}, we know that $$
[1,\infty)\ni p\mapsto p\big(\lambda_{1,p}(\Omega)\big)^\frac{1}{p}
$$
is an increasing function, whence getting
\begin{equation}
\label{4.4}
\lambda_{1,p}(\Omega)\geq\left(p^{-1}{\lambda_{1,1}(\Omega)}\right)^p
\geq n^p\nu_n^{\frac{p}{n}}{p^{-p}}|\Omega|^{-\frac{p}{n}}\ \ \forall\ \ p\in [1,\infty).
\end{equation}
It is easy to see that \eqref{4.4} not only is stronger than \eqref{4.3}'s case $p\in(1,n)$ but also ensures \eqref{4.1} when $p\in (1,2n]$.

\item If $n<p<\infty$, then \eqref{2.3} tells us that not only
$$
\gamma_p(\Omega)\geq \inf\limits_{K\subset\subset\Omega}|\Omega|^{-1}{{\rm cap}_p(K,\Omega)}\geq n\nu_n^{\frac{p}{n}}\left(\frac{p-n}{p-1}\right)^{p-1}|\Omega|^{-\frac{p}{n}},
$$
but also
\begin{equation}
\label{4.5}
\lambda_{1,p}(\Omega)\geq{(p-1)^{p-1}}{p^{-p}}\gamma_p(\Omega)\geq n\nu_n^{\frac{p}{n}}{(p-n)^{p-1}}{p^{-p}}|\Omega|^{-\frac{p}{n}},
\end{equation}
which ensures \eqref{4.1}'s case $p\in [2n,\infty)$.
\end{itemize}

Finally, a combination of \eqref{4.5} \& the limiting formula (cf. \cite[Remark 3.4]{X} or \cite{JLM, KF} for $\mathbb M^n=\mathbb R^n$)
$$
\lambda_{1,\infty}(\Omega)=\lim_{p\to\infty}\big(\lambda_{1,p}(\Omega)\big)^\frac1p=\lim_{p\to\infty}\big(\gamma_{p}(\Omega)\big)^\frac1p
$$
derives
$$
\lambda_{1,\infty}(\Omega)\ge \lim_{p\to\infty}n^\frac1p\nu_n^{\frac{1}{n}}{(1-p^{-1}n)}(p-n)^{-\frac{1}{p}}|\Omega|^{-\frac{1}{n}}=\nu_n^\frac1n|\Omega|^{-\frac1n},
$$
as desired within \eqref{4.1}'s case $p=\infty$.

\end{proof}

\end{document}